\documentclass[10 pt, draft]{amsart}
\usepackage{amsmath}
\usepackage{amsthm}
\usepackage{amsfonts}
\usepackage{amssymb}
\newtheorem{theorem}{Theorem}[section]

\theoremstyle{definition}

\newtheorem{corollary}[theorem]{Corollary}
\newtheorem{proposition}[theorem]{Proposition}
\newtheorem{example}[theorem]{Example}
\newtheoremstyle{remark}{9pt}{9pt}{}{0pt}{\bf}{.}{0.5em}{}
\theoremstyle{remark} \newtheorem{remark}[theorem]{Remark}

\def\id{\operatorname{id}}
\def\dist{\operatorname{dist}}

\title{Holomorphic mappings preserving Minkowski functionals}
\author{\L ukasz Kosi\'nski}
\address{Institute of Mathematics, Jagiellonian University, \L ojasiewicza 6, 30-348 Krak\'ow, Poland}
\email{{lukasz.kosinski}@gazeta.pl}
\thanks{The work is partially supported by the grant of the Polish Minister for Science and Higher Education
No. N N201 361436.
Moreover, some part of the paper was prepared while the stay of the author at the ESI Institute in Vienna.}
\keywords{Minkowski functionals, balanced domains, quasi-circular domains, open maps, proper holomorphic maps, the Shilov boundary}
\subjclass[2000]{32H99, 32A07, 32H35}
\begin{document}
\maketitle

\begin{abstract} We show that the equality $m_1(f(x))=m_2(g(x))$ for $x$ in a neighborhood of a point $a$ remains valid for all $x$ provided that $f$ and $g$ are open holomorphic maps, $f(a)=g(a)=0$ and $m_1,$ $m_2$ are Minkowski functionals of bounded balanced domains. Moreover, a polynomial relation between $f$ and $g$ is obtained. Next we generalize these results to bounded quasi-balanced domains.

Moreover, the main results of \cite{Ber-Piz} and \cite{Bou} are significantly extended and their proofs are essentially simplified.
\end{abstract}

\section{Introduction and statement of result}

Consider the following natural problem:

Let $V,$ $U$ be neighborhoods of $0,$ $V\subset U$. Let $f,g:U\to \mathbb C^k$ be
open mappings such that $f(0)=g(0)=0$ and $||f(x)||=||g(x)||$ on $V.$ Does it follow that $||f(x)||=||g(x)||$
for all $x\in U$? Is it possible to establish any relation between $f$ and $g$? For example, if $f,$ $g$ are
biholomorphic then, by the theorem of Cartan, $f=Lg$ for some linear $L.$

\medskip
The main goal of the paper is to give an affirmative answer to these questions in more general settings (e.g. instead of norms we consider quasi-Minkowski functionals of bounded quasi-circular domains).

As a by-product of our considerations we obtain a significant generalization of the main theorem of \cite{Ber-Piz}. It seems to be interesting that we prove much stronger results without using advanced tools like the theorem of Fornaess and Sibony (see \cite{For-Sib}) which was of the key importance in the paper of Berteloot and Patrizio, and what follows we do not use currents at all - this allows us to deal with non-plurisubharmonic Minkowski functionals (see Remark~\ref{ber}). Our proof is quite elementary - the key point relies upon the investigation of the Shilov boundaries of bounded balanced domains.

Finally, applying our results we show how to extend easily the central result of \cite{Bou}.

Our main result is the following:
\begin{theorem}\label{main} Let $m_1$ and $m_2$ be Minkowski functionals of bounded balanced domains in $\mathbb C^m$ and let $U$ be a domain in $\mathbb C^k$, $k\geq m.$  Let $f,g:U\rightarrow \mathbb C^{m}$ be holomorphic mappings such that $f(a)=g(a)=0$ and $f$ and $g$ are open in a neighborhood of $a$ for some $a\in U.$ Let $q\in\mathbb R.$ Assume additionally that $m_1(f(x))= (m_2 (g(x)))^q $ for $x$ in some neighborhood $V\subset U$ of $a$.

Then $q$ is a positive rational number and:
\begin{enumerate}
\item[1)] $m_1\circ f(x)=(m_2 \circ g(x))^q$ for all $x\in U,$
\item[2)] $f$ and $g$ are related in the following sense: there is a $p\in\mathbb N$ and there are homogenous
polynomials $\xi_k$ of degree $kq$, $k=1,\ldots,p,$ (if $kq\notin \mathbb N$, then $\xi_k\equiv0$) such that \begin{equation}
\label{ex}f(x)^p+f(x)^{p-1}\xi_1(g(x))+\ldots+\xi_p(g(x))=0,\quad x\in U.\end{equation} \end{enumerate} \end{theorem}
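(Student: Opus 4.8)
The plan is to argue in three stages: extract the value of $q$ from a one‑dimensional slicing, establish the algebraic relation \eqref{ex} first locally and then globally, and finally upgrade to the global identity 1).

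\medskip
\emph{Stage 1: the value of $q$.} Translate so that $a=0$. First record that a bounded balanced domain $D$ with Minkowski functional $m_D$ satisfies $c|z|\le m_D(z)\le C|z|$ for some $0<c\le C<\infty$: the left inequality is boundedness of $D$, and the right one holds because $\{m_D<r\}=rD$ is open, so $m_D$ is upper semicontinuous, hence bounded on the unit sphere, and $m_D$ is positively homogeneous. Now choose a generic $v$ and expand $f(tv)=t^{\mu}(P(v)+O(t))$ and $g(tv)=t^{\nu}(Q(v)+O(t))$, where $\mu=\operatorname{ord}_0 f$ and $\nu=\operatorname{ord}_0 g$ are finite (as $f,g$ are open, hence non‑constant) and $P(v),Q(v)\neq0$ for $v$ off a proper algebraic set. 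Substituting into $m_1\circ f=(m_2\circ g)^{q}$ and using $m_i\asymp|\cdot|$, the two sides are comparable to $|t|^{\mu}$ and to $|t|^{\nu q}$ as $t\to0$; this is possible only if $q>0$ and $\mu=\nu q$, so $q=\mu/\nu\in\mathbb Q_{>0}$. Note that this step does not require continuity of $m_1,m_2$.

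\medskip
\emph{Stage 2: the relation \eqref{ex}.} Since \eqref{ex} is an identity between holomorphic mappings, it is enough to prove it near $0$; the identity principle then extends it to all of $U$. An open holomorphic map has equidimensional fibres, so restricting $f$ and $g$ to a generic $m$‑dimensional slice through $0$ (on which both remain open) we may assume $k=m$, i.e. $f$ and $g$ are local branched coverings at $0$. Pick a small neighbourhood $\Omega$ of $0$ on which $F:=(f,g)$ is proper onto its image; by the proper mapping theorem $A:=F(\Omega)$ is an irreducible $m$‑dimensional analytic germ at $(0,0)$, finite over each of the two coordinate projections, and the hypothesis reads $m_1(w_1)=(m_2(w_2))^{q}$ for all $(w_1,w_2)\in A$. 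The crucial point is that $A$ is quasi‑homogeneous with weights $(\mu,\nu)$, i.e. invariant under $(w_1,w_2)\mapsto(\lambda^{\mu}w_1,\lambda^{\nu}w_2)$, $\lambda\in\mathbb C^{*}$. Granting this, describe $A$ (along its branched covering of the $w_2$‑axis, of degree $p$ say) by the monic minimal polynomials of the coordinates $w_{1,i}=f_i$ over $\mathbb C[w_2]=\mathbb C[g]$; matching weights forces the coefficient of $f_i^{\,p-k}$ to be a homogeneous polynomial in $g$ of degree $kq$, and to vanish identically when $kq\notin\mathbb N$. Pulling back by $x\mapsto(f(x),g(x))$ then yields \eqref{ex} on $\Omega$, hence on $U$.

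\medskip
The quasi‑homogeneity of $A$ is the step I expect to be the main obstacle, and it is here that the Shilov boundary enters. Restrict $A$ over a complex line $w_2=\sigma^{e}u$, with $e$ a convenient multiple of $\nu$ and of the ramification index, uniformize, and write a branch of $f$ over this line as $\widehat\zeta(\sigma)=\sigma^{N}\eta(\sigma)$ with $N=eq\in\mathbb N$ and $\eta$ holomorphic, $\eta(0)\neq0$; the order of vanishing is exactly $N$ because $m_1(\widehat\zeta(\sigma))=|\sigma|^{N}(m_2(u))^{q}$ and $m_1\asymp|\cdot|$. Then $m_1(\eta(\sigma))\equiv(m_2(u))^{q}$, so $\eta$ is a holomorphic disc contained in a single level set of $m_1$, and quasi‑homogeneity of $A$ is exactly the statement that $\eta$ is constant for every line and every branch. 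For a general balanced domain such discs need not be constant (the polydisc already provides examples), so one must exploit that these discs come from the correspondence $A$, which is \emph{simultaneously} a branched covering over both coordinate axes, with the analogous property relative to $m_2$. The mechanism forbidding a non‑trivial $\eta$ — in effect, that the associated boundary values are forced onto the Shilov boundary of the balanced domain, where no doubly compatible analytic structure survives — is precisely the Shilov‑boundary lemma advertised in the introduction; supplying it cleanly is the crux.

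\medskip
\emph{Stage 3: the identity 1).} Once \eqref{ex} holds on all of $U$, let $Z\subset\mathbb C^{m}\times\mathbb C^{m}$ be the variety it defines; by the degree conditions on the $\xi_k$, $Z$ is invariant under $(w_1,w_2)\mapsto(\lambda^{\mu}w_1,\lambda^{\nu}w_2)$. Near $(0,0)$ one has $m_1(w_1)=(m_2(w_2))^{q}$ on $Z$, and since this action preserves both $Z$ and — because $\mu=\nu q$ — the equation $m_1=(m_2)^{q}$, the identity spreads from a neighbourhood of $0$ to all of $Z$. As $(f(x),g(x))\in Z$ for every $x\in U$, we conclude $m_1(f(x))=(m_2(g(x)))^{q}$ on $U$, which is 1); the positivity and rationality of $q$ were already obtained in Stage 1.
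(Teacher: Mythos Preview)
Your outline mirrors the paper's architecture, but it is not a proof: the step you yourself call ``the crux'' is left open, and Stage~3 contains an actual error.

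\textbf{The missing Shilov argument.} Saying that ``the associated boundary values are forced onto the Shilov boundary \ldots\ where no doubly compatible analytic structure survives'' is a description of the difficulty, not a resolution. The paper's mechanism is concrete and not obvious from your sketch: one does \emph{not} show constancy of the disc $\eta$ for every line, but only for lines through points $x_0$ of the Shilov boundary $\partial_s\Omega_2$ (off the branch locus). For such $x_0$, invariance of the Shilov boundary under proper holomorphic maps forces $y_0=f(G_i(x_0))\in\partial_s\Omega_1$; a peak function $F$ near $y_0$ then contradicts, via the maximum principle for $F\circ\psi_{x'}$, any nonconstancy of $\psi_{x_0}(t)=f(G_i(tx_0))/t^q$. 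The extension from $\partial_s\Omega_2$ to all of $\Omega_2$ is a separate analytic-continuation step (symmetrize over branches, obtain a holomorphic function vanishing on $\partial_s\Omega_2$, hence identically). None of this is supplied in your Stage~2.

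\textbf{Stage 3 does not work.} You let $Z$ be the zero set of the $m$ coordinatewise equations \eqref{ex} and assert that $m_1(w_1)=(m_2(w_2))^q$ on $Z$ near $(0,0)$. This is false in general: over a fixed $w_2$, the $j$th equation allows $w_{1,j}=f_j(x^{(j)})$ for \emph{any} preimage $x^{(j)}\in g^{-1}(w_2)$, and different coordinates may choose different branches. The paper's own example, $f(x,y)=\tfrac12((x+y)^2,(x-y)^2)$, $g(x,y)=(x^2,y^2)$, $m_i=|\cdot|_1$, already exhibits this: both coordinate equations in \eqref{ex} coincide, so $Z$ contains the ``diagonal'' point $w_1=(r_+,r_+)$ with $r_+=\tfrac12(a+b)+\sqrt{ab}$ over $w_2=(a,b)$, and at $a=b=1$ one gets $m_1(w_1)=4\neq 2=m_2(w_2)$. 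This is precisely why the paper introduces the full system $(\dag)$ of equations $\Theta_I$ (indexed by all multisets $\{j_1,\dots,j_p\}\subset\{1,\dots,m\}$), whose solution set over $w_2$ is exactly $\{f(G_i(w_2)):i=1,\dots,p\}$, proves $m_1=(m_2)^q$ on \emph{that} set, and only afterwards extracts the diagonal relations \eqref{ex}. Your route from 2) to 1) therefore needs this larger system; as written, Stage~3 is circular.

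\textbf{The reduction to $k=m$.} Restricting to a generic $m$-plane proves \eqref{ex} only on that plane, which is not open in $U$, so the identity principle does not immediately propagate the relation (or even guarantee the same $\xi_k$ work off the slice). The paper's device is different: augment $f,g$ to open maps $\tilde f,\tilde g:U\to\mathbb C^k$ by appending coordinates $e^{p_i\varphi}(\text{linear forms})^{p_i}$ with $\varphi$ built from Blaschke products, apply the equidimensional case, and then strip off the extra coordinates using the transcendence of $e^{\varphi}$.
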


Let us explain the notation in the above theorem. First of all recall that a~mapping $f$ is said to be \textit{open in a neighborhood of $a$} if there is a neighborhood of $a$~such that the restriction of $f$ to this neighborhood is open. For $z,w\in\mathbb C^{n}$ put $z\cdot w=(z_1w_1,\ldots,z_{n}w_{n});$ $z^k$, $k\in \mathbb Z,$ is understood analogously (i.e. $z^k:=z\cdot \ldots \cdot z,$ $z^{-1}=(z_1^{-1},\ldots,z_n^{-1}$)).

Moreover, the unit disc in the complex plane is denoted by $\mathbb D$ and $\partial_s \Omega$ stands for the Shilov boundary of a bounded domain $\Omega$ in $\mathbb C^n.$

\section{Proof of the main theorem, remarks and examples}

\begin{proof}[Proof of Theorem~\ref{main}]
Losing no generality we may assume that $a=0$ and $m\geq 2.$ Moreover, it is clear that $q\in \mathbb Q_{>0}.$ Take $p_1,p_2\in \mathbb N$ such that $q=\frac{p_1}{p_2}.$

\textit{Step 1'} First we focus our attention on the case when $k=m$. It follows from the Remmert's theorem (see \cite{Rem}) that $0$ is an isolated point of $g^{-1}(0)$ and $f^{-1}(0).$ Therefore, shrinking $V$ if necessary we may assume that $f|_V$ is proper onto image. Moreover, there is a domain $V'$ such that  $0\in V'\subset V$, $g|_{V'}$ is also proper onto image and $g^{-1}(0)\cap V'= \{0\}.$ Put $\mathcal V=g(\{x\in V':\ \det g'(x)=0\})$ and fix $\delta>0$ such that $\Omega_2= \{x \in \mathbb C^m:\ m_2(x)< \delta\}$ and $\Omega_1= \{x\in \mathbb C^m:\ m_1(x)<\delta^q \}$ are relatively compact in $g(V')$ and $f(V)$, respectively. Since $V'\cap g^{-1}(0)=\{0\}$, one can see that $g^{-1}(\Omega_2)$ is a domain.

Take $x_0\in \partial_s \Omega_2\setminus \mathcal V$ and let $G_j,$ $j=1,\ldots, p,$ be local inverses to $g|_{V'}$ defined in a~neighborhood of $x_0$, i.e. $g^{-1}=\{G_1,\ldots, G_p\}.$ It follows from the invariance of the Shilov boundary under proper holomorphic mappings (see \cite{Kos}, Theorem~3) that there is an index $i$ (fixed from now on) such that $G_i(x_0)\in \partial_s g^{-1}(\Omega_2).$ Put $y_0:=f(G_i(x_0)).$ Since $g^{-1}(\Omega_2)=f^{-1}(\Omega_1)$ we may apply the argument from \cite{Kos} again to state that $y_0\in \partial_s \Omega_1.$

We aim at showing that the map $$t\mapsto \frac{f\circ G_i (tx_0)}{t^q}$$ (defined in a neighborhood of $1$) is constant. Put $\psi_x(t):=\frac{f\circ G_i (tx)}{t^q},$ $t\in \mathbb D(1,r):=\{\lambda\in \mathbb C:\ |\lambda-1|<r\},$ where $r$ is sufficiently small.

Assume the contrary, i.e. $\psi_{x_0}$ is non-constant. Then there is $0<r'<r$ such that $y_0\notin \psi_{x_0} (\partial \mathbb D(1,r')).$ Using the uniform convergence argument one can easily see that there is an $\epsilon >0$ and there is a neighborhood $U(x_0)\subset g(V')\setminus\mathcal V$ of $x_0$ such that $\psi_x$ is well defined in a neighborhood of $\overline{\mathbb D(1,r')}$ (decrease $r'$ if necessary) and $\dist (y_0, \psi_{x} (\partial \mathbb D(1,r')))>\epsilon$ whenever $x\in U(x_0).$

Let $V(x_0)$ be an open neighborhood of the point $x_0$ such that $V(x_0)\subset U(x_0)$ and $\dist(y_0, V(y_0)) <\frac{\epsilon}{2},$ where $V(y_0) =f(G_i(V(x_0)))$.

Since $y_0$ lies in the Shilov boundary of $\Omega_1$, there is an $F\in \mathcal O(\Omega_1)\cap \mathcal C (\overline{\Omega_1})$ such that $\max\{|F(x)|:\ x\in V(y_0)\cap \overline{\Omega_1}\}>\max\{|F(x)|:\ x\in  \overline{\Omega_1}\setminus V(y_0)\}$ (otherwise the Shilov boundary of $\Omega_1$ would be contained in $\overline{\Omega_1}\setminus V(y_0)$). Choose $\tilde y\in V(y_0) \cap \overline{\Omega_1}$ at which the maximum on the left side is attained and note that taking $y'\in \Omega_1\cap V(y_0)$ sufficiently close to $\tilde y$ we get the following inequality:
\begin{equation}\label{ee}|F(y')|>\max\{|F(y)|:\ x\in \overline{\Omega_1}\setminus V(y_0)\}.\end{equation}
Let $x'\in V(x_0)$ be such that $y'=f(G_i(x'))$.

First, observe that $m_1(y')=m_1(f(G_i(x')))=m_2(g(G_i(x')))^q = m_2(x')^q,$ so $x'\in \Omega_2$. Note also that $m_1(\psi_{x'}(t))=m_2(x')^q,$ hence $\psi_{x'}(\overline{\mathbb D(1,r')})\subset \Omega_1$. Moreover, $\psi_{x'}(1) = y'$ and $\psi_{x'}(\partial \mathbb D(1,r'))\cap V(y_0)=\varnothing$.

But a function $F\circ \psi_{x'}$ attains its maximum on $\partial \mathbb D(1,r')$. This contradicts (\ref{ee}).

\textit{Step 1''} It is clear that $\mathcal V \subset \{x\in g(V'):\ \Phi(x)=0\}$ for some holomorphic function $\Phi$ on $g(V')$, $\Phi\neq 0$ (the function $\Phi$ may be given explicitly - for example one may take $\Phi(x)=\prod_{j=1}^p \det g'(G_j(x))$ where $G_j$ are local inverses to $g$).

Define $\tilde \Psi (t,x,y):=\prod_{i,j}(f(G_i(x))-t^{p_1} f(H_j(y))),$ $x,y\in g(V')$, $t\in \mathbb D$, where $G_i,$ $H_j$ are local inverses to $G$ defined in a neighborhood of $x$ and $y,$ respectively. Put $\Psi(t,x):= \tilde \Psi (t,t^{p_2}x, x),$ $x\in g(V'),$ $t\in \mathbb D.$ It follows easily from \textit{Step 1'} that for every $x\in \partial_s \Omega_2\setminus \mathcal V$ the mapping $\Psi(\cdot,x)$ vanishes in a neighborhood of $1$. Hence $\Psi(t,x)=0$ for any $t\in \mathbb D$ and $x\in \partial_s\Omega_2\setminus \mathcal V.$ Therefore, for a fixed $t\in \mathbb D$ the mapping $\Phi\cdot \Psi(t,\cdot)$ vanishes on $\partial_s\Omega_2$, so by the properties of the Shilov boundary $\Phi\cdot \Psi\equiv 0.$ Whence $\Psi\equiv 0.$

Fix $x'\in \Omega_2\setminus \mathcal V$, $l\in \{1,\ldots,m\}$ and observe that there is an $i$ such that $f_l(G_i(t^{p_2} x))=t^{p_1} f_l(G_i(x))$ for $t$ in a neighborhood of $1$ and $x$ in a neighborhood of $x'.$ We aim at showing that  \begin{equation}\label{allj} f_l(G_j(t^{p_2} x')) =t^{p_1} f_l(G_j(x'))\quad \text{for}\ j=1,\ldots,p, \ \text{and} \ t\ \text{sufficiently close to}\ 1.\end{equation} To prove it put $y_i=G_i(x')$ and $y_j=G_j(x').$ Note that $y_i$ and $y_j$ may be joined by a path $\gamma:[0,1]\to g^{-1}(\Omega_1)\setminus \mathcal U,$ where $\mathcal U=g^{-1} (\mathcal V).$ Put $\Gamma=g \circ \gamma.$ A standard compactness argument allows us to find a partition of the interval $0=t_0<t_1< \ldots <t_n=1$ and open balls $(B_k)_{k=1}^N$ covering $\Gamma^*$, $B_k\subset \subset \Omega_2\setminus \mathcal V,$ such that $\Gamma([t_{k-1},t_k])\subset B_k$ and preimage $g^{-1}(B_k)$ has exactly $p$ connected components, $k=1,\ldots, N$.

There is a unique holomorphic mapping $H_1$ on $B_1$ such that $g\circ H_1=\id$ and $H_1(\Gamma(t)) = \gamma(t)$ for $t\in [t_0,t_1]$. Note that $H_1=G_i$, so by the identity principle $f_l(H_1(t^{p_2} x))=t^{p_1} f_l(H_1(x))$ for $x\in B_1$ and $t$ sufficiently close to $1$. Similarly, there is a holomorphic mapping $H_2$ on $B_2$ such that $g\circ H_2=\id$, $H_2(\Gamma(t)) = \gamma(t)$ for $t\in [t_1,t_2]$ and $H_1=H_2$ on $B_1\cap B_2$. Using the identity principle again we get the relation $f_l(H_2(t^{t_2} x))=t^{p_1} f_l(H_2(x))$ for $x\in B_2$ and $t$ sufficiently close to $1.$ Proceeding inductively one may construct a mapping $H_N$ holomorphic on $B_N$ such that $H_N=H_{N-1}$ on $B_{N-1}\cap B_N,$ $g\circ H_N=\id$, and $G_N(x')=H_N(\Gamma(t_N))=\gamma(1)=y_j.$ Moreover  $f_l(H_N(t^{p_2} x))=t^{p_1} f_l(H_N(x))$, for $x\in B_N$ and $t$ close to $1.$ Note that $H_N=G_j$ in a neighborhood of $x'$ and this finishes the proof of (\ref{allj}).

Thus, we have shown that for any $x\in \Omega_2\setminus \mathcal V$ the equality $f(G_j(t^{p_2} x)) =t^{p_1} f(G_j(x))$ remains valid for all $j=1,\ldots, p,$ and $t$ sufficiently close to $1.$

\textit{Step 1'{''}} Let us consider the following system of equations $$(\dag)\ \sum_{\sigma\in
\Sigma_p}\prod_{k=1}^p(y_{j_k}-\lambda_{\sigma(k), j_k})=0,\quad \{j_1,\ldots , j_p\} \ \subset \{1,\ldots, m\},\
j_1\leq \ldots \leq j_p,$$ where $\Sigma_p$ denotes the set of $p$-permutations. Note that for
the given $(\lambda_{i,j})_{i=1,\ldots,p}^{j=1,\ldots,m}$ the only solutions $y=(y_1,\ldots, y_m)$ of the system ($\dag$) are given by the formulas $y=(\lambda_{i,1},\ldots, \lambda_{i,m}),$ $i=1,\ldots,p$. To show it observe that any root of the equations in (\dag) with $j_1=\ldots=j_p$ is of the form $(\lambda_{i_1,1},\ldots, \lambda_{i_m,m}).$ What remains to do is to show that $i_1=\ldots=i_m$. If $i_{\iota}\neq i_{\tilde{\iota}}$, then it suffices to analyze the equations in the system (\dag) satisfying $\{j_1,\ldots,j_p\}= \{i_{\iota}, i_{\tilde{\iota}}\}.$ Since these computations are quite simple and tedious, we omit them here.

Multiplying out we get mappings $\xi^I_{\alpha},$ where $|\alpha|<p$ and $I=I(j_1,\ldots,j_p),$ such that $$\sum_{\sigma\in \Sigma_p}\prod_{k=1}^p(y_{j_k}-\lambda_{\sigma(k), j_k})=p! y_{j_1}\ldots y_{j_p}+\sum_{|\alpha|<p}
\xi^I_{\alpha}(\lambda)y^{\alpha}.$$ Observe that $\xi^I_{\alpha}$ are homogenous of order $p-|\alpha|$ and note that they are \emph{quasi-symmetric} in the following sense: \begin{multline}\xi^I_{\alpha} (\lambda_{1,1}, \ldots, \lambda_{1,m}, \ldots, \lambda_{p,1},\ldots, \lambda_{p,m})=\\ \xi^I_{\alpha} (\lambda_{\sigma(1),1}, \ldots, \lambda_{\sigma(1),m},\ldots, \lambda_{\sigma(p),1}, \ldots, \lambda_{\sigma(p),m}) \quad \text{for any}\ \sigma\in \Sigma_p.\end{multline} Therefore it is clear that $\zeta^I_{\alpha}:=\xi^I_{\alpha}\circ f\circ g^{-1}:= \xi^I_{\alpha} (f_1\circ G_1,\ldots, f_m\circ G_1,\ldots, f_1\circ G_p,\ldots, f_m\circ G_p)$ is a well defined holomorphic mapping on $g(V').$

It follows from above considerations (\emph{Step 2}) that \begin{equation}\zeta^I_{\alpha}(t^{p_2}x)=t^{p_1(p-|\alpha|)}\zeta^I_{\alpha}(x)\quad \text{for all}\ x\in \Omega_2\ \text{and}\ t\in \mathbb D.
\end{equation}

Now one may write down the Taylor expansion of $\zeta^I_{\alpha}$ around $0$ in order to verify that $\zeta^I_{\alpha}$ are homogenous polynomials of degree $q(p-|\alpha|)$ (obviously, if $q(p-|\alpha|)\notin \mathbb N,$ then $\zeta^I_{\alpha}\equiv 0$).

Consider the following system of equations: \begin{equation}\label{row}\Theta_I(x,y):=p! y_{j_1}\ldots y_{j_p}+\sum_{|\alpha|<p}
\zeta^I_{\alpha}(x)y^{\alpha}=0,\quad I=I(j_1,\ldots, j_p),\end{equation} $\{j_1,\ldots, j_p\}\subset \{1,\ldots, m\},$ $1\leq j_1\leq\ldots\leq j_p\leq m.$ First observe that \begin{equation}\label{r1}\Theta_I(t^{p_2}x, t^{p_1}y)=t^{pp_1} \Theta_I(x,y),\quad t\in\mathbb C.\end{equation}
Note also that for $x$ lying sufficiently close to $0$ the following property holds:
\begin{equation}\label{r}m_2(x)^q=m_1(y)\quad \text{for any root $y$ the system of equations}\quad \Theta_I(x,\cdot)=0. \end{equation} To prove it take $x\in g(V')$. It follows from the definition of the mappings $\zeta_{\alpha}$ that all roots of the equation (\ref{row}) are given by formulas $y=f(x_i),$ where $g(x_i)=x,$ $i=1,\ldots,p$ (precisely $x_i=g^{-1}(x)$ if $x\notin \mathcal V$). The assumptions of the theorem imply that for such a solution $y$ $$m_1(y)=m_1(f(x_i))=m_2(g(x_i))^q=m_2(x)^q,$$ which proves (\ref{r}) for $x$ sufficiently close to $0$. Making use of (\ref{r1}) we find that the relation (\ref{r}) holds for all $x.$

The equality $\Theta_I(g(x),f(x))=0$ holds in the neighborhood of $0,$ so by the identity principle
$\Theta_I(g(x),f(x))=0$ for $x\in U.$ This means that $f(x)$ is the root of the equations $\Theta_I(g(x),\cdot)=0$ for any $x\in U.$ It follows from (\ref{r}) that $m_1\circ f=(m_2\circ g)^q.$

In order to prove the second assertion it suffices to repeat the above reasoning to the mappings $\xi_{\alpha}^I$ with $I=I(j,\ldots,j),$ $j=1\ldots, m.$  To be more precise let us define
\begin{align}\tilde \xi_k(x):=&\sum_{1\leq i_1<\ldots <i_k\leq p} x_{i_1} \ldots x_{i_k},\quad x= (x_1,\ldots, x_p)\in\mathbb C^p,\\\xi_k(\lambda):=&(\tilde \xi_k(\lambda_1),\ldots, \tilde \xi_k
(\lambda_{m})),\quad \lambda=(\lambda_1,\ldots , \lambda_m)\in (\mathbb C^p)^{m}. \end{align} Put $\zeta_k:=\xi_k\circ f\circ g^{-1}$ and $$\Theta(x,y):=y^p-\zeta_1 (x) y^{p-1} +\ldots +(-1)^p\zeta_p(x).$$ As before we prove that $\Theta(f,g)\equiv 0.$

\textit{Step 3} Now we shall show the theorem for $k>m$. It follows from the Remmert's theorem that $\dim_0 f^{-1}(0)=k-m.$ Using basic properties of analytic sets one can find an $m$-dimensional vector space $L$ in the Grassmannian $\mathbb G(m,k)$ such that $0$ is an isolated point of $L\cap f^{-1}(0)$ and $L\cap g^{-1}(0)$. We lose no generality assuming that the space $L$ is of the form $L=\{(x_1,\ldots, x_m, \sum \alpha^{m+1}_j x_j,\ldots, \sum \alpha^k_j x_j):\ x_i \in \mathbb C\}$ for some $\alpha^l_j\in \mathbb C,$ $j=1,\ldots, m$, $l=m+1,\ldots, k.$ Fix $r>0$ such that the polydisc $(r\mathbb D)^k$ is relatively compact in $V$. Let $\tilde B$ be an arbitrary infinite Blaschke product not vanishing on $\frac{1}{2} \mathbb D$ and define $B(\lambda)=\tilde B(\lambda r^{-1}),$ $\lambda\in r\mathbb D.$

Put $\tilde f:=(f,\psi^{p_1}):=(f,e^{p_1\varphi} ( x_{m+1}-\sum \alpha^{m+1}_j x_j)^{p_1}, \ldots, e^{p_1\varphi} (x_k-\sum \alpha^k_j x_j)^{p_1})$ and $\tilde g:=(g,\psi^{p_2}):=(g,e^{p_2\varphi} (x_{m+1}-\sum \alpha^{m+1}_j x_j)^{p_2}, \ldots, e^{p_2\varphi}( x_k-\sum \alpha^k_j x_j)^{p_2}),$ where $\varphi(x_1,\ldots,x_k):=\frac{1}{B(x_1)}+\ldots + \frac{1}{B(x_k)}$. Observe that the mappings $\tilde f$ and $\tilde g$ are locally open in a~neighborhood of $0$ (as $0$ is an isolated point of the fibers $\tilde f^{-1}(0)$ and $\tilde g^{-1}(0)$).

Put $|y|:=|y_1|+\ldots+ |y_{k-m}|,$ $y\in \mathbb C^{k-m},$ and
$$\nu_i(x,y) :=\left(m_i(x)^{\frac{1}{p_i}} + |y|^{\frac{1}{p_i}}\right)^{p_i},\quad (x,y)\in \mathbb C^k=\mathbb C^m\times \mathbb C^{k-m},\  i=1,2.$$

It is clear that the equality $\nu_1(\tilde f) =\nu_2(\tilde g)^q$ holds in a neighborhood of $0$. Applying the previous step we get a natural number $p$, homogenous polynomials $\tilde\zeta_{\alpha}^I$ and corresponding maps $\tilde\Theta_I$ such that $\tilde \Theta_I( \tilde g, \tilde f)=0$. Moreover, the system of equalities $\tilde \Theta_I(x,y)=0,$ $x,y\in \mathbb C^k,$ implies that $\nu_2(x)^q=\nu_1(y).$

Expanding we infer that $$\tilde\Theta_I (\tilde g, \tilde f)=\tilde \Theta_I ((g,\psi^{p_2}), (f, \psi^{p_1}))=\theta_I(g,f) + e^{\varphi} h_1 + \ldots + e^{s\varphi} h_p$$ for some $s\in \mathbb N$, holomorphic maps $h_i$ on $U$ and a $\theta_I$ given by the formula $\theta_I(x,y):= \tilde \Theta_I((x,0),(y,0)).$ Making use of the construction of $\varphi$ we immediately state that $\theta_I(g,f)\equiv h_1\equiv \ldots\equiv h_p\equiv 0.$ Therefore $\tilde \Theta_I((g,0),(f,0))\equiv 0$. Whence $m_1(f(x))=m_2(g(x))^q$ for all $x\in U,$ as claimed.

The relation (\ref{ex}) may be shown analogously.
\end{proof}

\begin{remark}\label{ber} A consideration of the equality $m_1(f(x))=m_2(x)^p$ in a neighborhood of $0$, where $f$ is a proper holomorphic map and $m_1,$ $m_2$ are Minkowski functionals of pseudoconvex balanced bounded domains is the key point of the proof of the main theorem in \cite{Ber-Piz}. The authors investigated this equality with the help of advanced tools of the projective dynamic.

Note that in \textit{Step 1'} of the proof of Theorem~\ref{main} the more general equality was considered (we did not even need the plurisubharmonicity) and the methods we were using are much simpler (actually, in this case $i=1$ and the other steps of the proof are not needed).
\end{remark}

\begin{remark}
The statement of Theorem~\ref{main} is clear if $m_1$ and $m_2$ are the Euclidean norms and $f$, $g$ are arbitrary holomorphic mappings (as the Euclidean norm is $\mathbb R$-analytic). One may check that in this case $p=1$.

Similarly, the statement of Theorem~\ref{main} is clear in the case when $m_1,$ $m_2$ are operator norms (as the operator norm is $\mathbb R$-analytic except for an analytic set).
\end{remark}

\begin{remark}
Note that in the case when $m=k$ and $q=1$, the number $p$ occurring in the statement of Theorem~\ref{main} is equal to the multiplicity of the mapping $f$ (restricted to some neighborhood of $0$). Note also that for $p=1$ the mappings $f$ and $g$ are not necessary biholomorphic (but then $f=\zeta_1 g$ for a linear mapping $\zeta_1$).

Assume that $p$ occurring in Theorem~\ref{main} is equal to $2.$ Then we are able to solve the equation
(\ref{ex}) and state that $f(x)=Q_1(g(x))+\sqrt{Q_2(g(x))},$ where $Q_1$ is linear mapping, $Q_2$ is a homogenous
polynomial of degree $2,$ and the branch of the square is chosen so that $\sqrt{Q_1\circ g}$ is holomorphic.

Generally, we cannot conjecture that $Q_2$ vanishes. Consider the following example: $m_i(x,y)=|(x,y)|=|x|+|y|,$ $i=1,2,$ $f(x,y) = \frac{1}{2} (x^2+2xy+y^2 ,x^2-2xy+y^2)$ and $g(x,y)= (x^2,y^2).$ Then obviously
$|f(x,y)|=|g(x,y)|,$ $Q_1(x,y)=1/2(x+y,x+y)$ and $Q_2(x,y)=xy.$ \end{remark}

\begin{remark}
The assumptions of the openness of the mappings $f$ and $g$ in a neighborhood of $a$ are important. This is illustrated by the following example: $f(x,y)=(xy,x^2y),$ $g(x,y)= (xy,y)$ and $||(x,y)||=\max\{|x|,|y|\}.$ Clearly $||f(x,y)||=||g(x,y)||$ if and only if $|x|\leq 1$ or $y=0.$

Note also that for any neighborhood $U$ of $0$ the images $f(U)$ and $g(U)$ are not analytic.

It is natural to ask whether the assumption of the openness may be weakened. We would like to point out that answer to this question is obvious in the case $m=2$ - it is sufficient to consider the Weierstrass polynomials of $f$ and $g$. This reasoning however cannot be applied to $m\geq 3.$
\end{remark}

\section{Quasi-circular domains}
Let $k_1,\ldots,k_n$ be natural numbers. A domain $D$ of $\mathbb C^n$ is said to be $(k_1,\ldots,k_n)$-circular if \begin{equation}\label{defcir} (\lambda^{k_1}x_1,\ldots,\lambda^{k_n}x_n) \in D\quad \text{whenever}\quad \lambda\in\partial\mathbb D,\ x=(x_1,\ldots,x_n)\in D.\end{equation} If the formula (\ref{defcir}) holds for any $\lambda\in \overline{\mathbb D},$ then $D$ is said to be $(k_1,\ldots,k_n)$-balanced (or $(k_1,\ldots,k_n)$-complete circular).

A domain $\Omega$ is called to be \textit{quasi-circular} (respectively \textit{quasi-balanced}) if it is $k$-circular (resp. $k$-balanced) for some $k=(k_1,\ldots,k_n)\in \mathbb N^n.$

For $k=(k_1,\ldots,k_n)$-balanced domain $D\subset \mathbb C^n$ one may define its \emph{$k$-Minkowski functional} (a \emph{quasi}-Minkowski functional) by the following formula
\begin{equation}\mu_{D,k}(x):=\inf\{\lambda>0:\ (\lambda^{-k_1}x_1,\ldots,\lambda^{-k_n}x_n)\in D\},
\end{equation} $x=(x_1,\ldots,x_m)\in\mathbb C^n.$ The introduced above function has similar properties as the standard Minkowski functional.  Recall them for the convenience of the reader:

\begin{align} &\mu_{D,k}(\alpha^{k_1}x_1,\ldots,\alpha^{k_n}x_n)=|\alpha|\mu_{D,k}(x),\quad x\in\mathbb C^n,\ \alpha\in\mathbb C,\\
&D=\{x\in\mathbb C^n:\ \mu_{D,k}(x)<1\}.
\end{align}

For $k=(k_1,\ldots,k_n)\in \mathbb N$ and $x\in \mathbb C^n$ denote $k.x:=(x_1^{k_1},\ldots , x_n^{k_n}).$

Let $D$ be a $k$-balanced domain and $\mu_{D,k}$ be the quasi-Minkowski functional associated with this domain. Put $\tilde k_j:=\frac{k_1\cdot \ldots \cdot k_n}{k_j},$ $\tilde k:=(\tilde k_1,\ldots,\tilde k_n)$ and define $m(x):=\mu_{D,k} (\tilde k^{-1}.x)$. One may check that $m$ is radial. In particular, $m$ is the Minkowski functional of a bounded balanced domain and it satisfies the property $m(\tilde k.x)=\mu_{D,k}(x),$ $x\in \mathbb C^n.$ On the other hand $\tilde k.f$ is open provided that $f$ is an open holomorphic mapping.

This simple observation leads us to the following
\begin{corollary}\label{maincor} Let $\mu_1,$ $\mu_2$ be quasi-Minkowski functionals of quasi-balanced domains. Let $f,g:U\rightarrow \mathbb C^{m}$ be a holomorphic mapping such that $f(a)=g(a)=0,$ for some $a\in U\subset \mathbb C^k,$ $k\geq m.$ Assume that $q\in\mathbb R.$ If $\mu_1(f(x))= (\mu_2 (g(x)))^q $ in a neighborhood $V\subset U$ of $a$ and the restrictions $f|_{V'}$, $g|_{V'}$ are open, then $\mu_1\circ f(x)=(\mu_2 \circ g(x))^q$ for all $x\in U$ and $q\in \mathbb Q_{>0}$.
\end{corollary}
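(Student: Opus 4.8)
The plan is to reduce Corollary~\ref{maincor} directly to Theorem~\ref{main} by means of the radialization trick described in the paragraph immediately preceding the statement. Concretely, suppose $\mu_1 = \mu_{D_1,k^{(1)}}$ and $\mu_2 = \mu_{D_2,k^{(2)}}$ are quasi-Minkowski functionals of $k^{(i)}$-balanced domains $D_i\subset\mathbb C^m$. First I would introduce $\tilde k^{(i)}$ (the vector of ``complementary products'' $\tilde k^{(i)}_j = (k^{(i)}_1\cdots k^{(i)}_m)/k^{(i)}_j$) and set $m_i(x) := \mu_{D_i,k^{(i)}}((\tilde k^{(i)})^{-1}.x)$. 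As recalled in the excerpt, each $m_i$ is then the Minkowski functional of a bounded balanced domain, and it satisfies $m_i(\tilde k^{(i)}.x) = \mu_i(x)$ for all $x\in\mathbb C^m$. Next I would replace $f$ and $g$ by $\hat f := \tilde k^{(1)}.f$ and $\hat g := \tilde k^{(2)}.g$; since $f$ and $g$ are open in a neighborhood of $a$, and the coordinatewise power map $z\mapsto \tilde k.z$ is open (again as noted in the excerpt), the maps $\hat f$ and $\hat g$ are open in a neighborhood of $a$. Moreover $\hat f(a) = \hat g(a) = 0$.

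The second step is to transport the hypothesis. By construction, for $x$ in the neighborhood $V$ we have
\begin{equation*}
m_1(\hat f(x)) = m_1(\tilde k^{(1)}.f(x)) = \mu_1(f(x)) = (\mu_2(g(x)))^q = (m_2(\tilde k^{(2)}.g(x)))^q = (m_2(\hat g(x)))^q .
\end{equation*}
Thus $m_1, m_2, \hat f, \hat g, q$ satisfy exactly the hypotheses of Theorem~\ref{main}. Applying that theorem yields that $q$ is a positive rational number and that $m_1(\hat f(x)) = (m_2(\hat g(x)))^q$ for all $x\in U$. Reading this equality backwards through the identities $m_i(\tilde k^{(i)}.\,\cdot\,) = \mu_i$ gives $\mu_1(f(x)) = m_1(\hat f(x)) = (m_2(\hat g(x)))^q = (\mu_2(g(x)))^q$ for every $x\in U$, which is the assertion of the corollary. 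The rationality and positivity of $q$ come for free from Theorem~\ref{main}.

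There is essentially no serious obstacle here; the only points that need a little care are bookkeeping ones. One must check that $z\mapsto \tilde k.z$ being open (a consequence of the fact that each coordinate is a nonconstant holomorphic function of one variable, so $(\tilde k^{(i)})$-power map is a finite branched cover, hence open) really does make $\hat f$ and $\hat g$ open near $a$ — composition of open maps is open, so this is immediate once the power map is known to be open. One should also note that although $\mu_i$ is not radial, the auxiliary $m_i$ is, so Theorem~\ref{main} genuinely applies; this is precisely the content of the ``simple observation'' preceding the corollary, and I would simply cite it. Finally, there is a harmless discrepancy in the statement between ``$f|_{V'},g|_{V'}$ open'' and ``openness in a neighborhood of $a$'' in Theorem~\ref{main}; one takes $V'\subset V$ to be that neighborhood, and the argument goes through verbatim. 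Hence the proof is a one-line reduction once the radialization identities are in place.

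\begin{proof}[Proof of Corollary~\ref{maincor}]
Write $\mu_i = \mu_{D_i, k^{(i)}}$ for $k^{(i)}$-balanced bounded domains $D_i\subset\mathbb C^m$, $i=1,2$. With the notation of the paragraph preceding the statement, put $\tilde k^{(i)}_j := (k^{(i)}_1\cdots k^{(i)}_m)/k^{(i)}_j$ and $m_i(x) := \mu_i\big((\tilde k^{(i)})^{-1}.x\big)$. Then $m_i$ is the Minkowski functional of a bounded balanced domain and $m_i(\tilde k^{(i)}.x) = \mu_i(x)$ for $x\in\mathbb C^m$. Set $\hat f := \tilde k^{(1)}.f$ and $\hat g := \tilde k^{(2)}.g$. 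Since the coordinatewise power map $z\mapsto \tilde k^{(i)}.z$ is an open holomorphic map and $f,g$ are open in a neighborhood $V'\subset V$ of $a$, the maps $\hat f,\hat g$ are open in a neighborhood of $a$; moreover $\hat f(a)=\hat g(a)=0$. For $x\in V'$,
\begin{equation*}
m_1(\hat f(x)) = m_1(\tilde k^{(1)}.f(x)) = \mu_1(f(x)) = (\mu_2(g(x)))^q = (m_2(\tilde k^{(2)}.g(x)))^q = (m_2(\hat g(x)))^q .
\end{equation*}
Hence $m_1, m_2, \hat f, \hat g$ satisfy the hypotheses of Theorem~\ref{main}. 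By that theorem, $q\in\mathbb Q_{>0}$ and $m_1(\hat f(x)) = (m_2(\hat g(x)))^q$ for all $x\in U$. Using $m_i(\tilde k^{(i)}.\,\cdot\,)=\mu_i$ once more, we obtain $\mu_1(f(x)) = m_1(\hat f(x)) = (m_2(\hat g(x)))^q = (\mu_2(g(x)))^q$ for every $x\in U$, which completes the proof.
\end{proof}
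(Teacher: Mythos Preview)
Your proposal is correct and follows precisely the approach the paper intends: the corollary is stated immediately after the ``simple observation'' that replacing $\mu_i$ by the radial $m_i$ and $f,g$ by $\tilde k^{(i)}.f,\ \tilde k^{(i)}.g$ reduces the quasi-balanced case to Theorem~\ref{main}, and you have written out exactly this reduction. There is nothing to add.
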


One can try to derive a counterpart of the second assertion of Theorem~\ref{main} in the case of quasi-Minkowski functionals. Since the possible formula is a little complicated and self-evident, we omit it here.

\section{Applications to the paper of Boutat}

\begin{remark} It is well known by the Bell's result (see \cite{Bel}), that any proper mapping $f$ between complete
quasi-circular domain such that $f^{-1}(0)=\{0\},$ is a polynomial. So we may expand $f=\sum_{j=p}^q Q_j,$
$p\leq q,$ where $Q_j$ are homogenous of degree $j.$ Let us introduce the following notation: $\rho(f):=Q_p,$
$\varrho(f):=Q_q.$ \end{remark}

Define $||x||_k:=\sum |x_i|^{\frac{1}{k_i}}.$

\begin{proposition}\label{p} Let $D,\Omega_1,\Omega_2\subset\subset \mathbb C^n$ be pseudoconvex quasi-balanced domains ($\Omega_1$ and $\Omega_2$ are assumed to be $k$ and $l$ balanced, respectively). Let $f_i:D\to \Omega_i$ be proper
holomorphic mappings such that $f_i^{-1}(0)=\{0\},$ $i=1,2.$  Assume that there are $m,M>0$ such that $m ||f_2(x)||_{l}^q \leq ||f_1(x)||_{k} \leq M||f_2(x)||_{l}^q.$

Then $\mu_1(f_1(x))=\mu_2(f_2(x))^q,$ $\mu_1(\varrho(f_1)(x)) =\mu_2(\varrho(f_2)(x))^q$ and $\mu_1(\rho(f_1)(x))= \mu_2(\rho(f_2)(x))^q$ for $x\in\mathbb C^n,$ where $\mu_1$ and $\mu_2$ are the $k$- and $l$-Minkowski functionals of $\Omega_1,$ $\Omega_2,$ respectively.

In particular, if $f_1$ is a homogenous polynomial, then $f_2$ is homogenous, as well.
\end{proposition}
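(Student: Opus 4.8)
\textbf{Proof plan for Proposition~\ref{p}.}

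The plan is to reduce this statement to Corollary~\ref{maincor} (equivalently Theorem~\ref{main}), which already handles the passage from a local identity to a global identity and the polynomial relation. The first step is to deal with the two-sided estimate $m\,||f_2(x)||_l^q\le ||f_1(x)||_k\le M\,||f_2(x)||_l^q$ and upgrade it to an exact equality of quasi-Minkowski functionals near $0$. For this I would use homogeneity of the proper polynomial maps $f_i$ (via the Bell decomposition $f_i=\sum_{j=p_i}^{q_i}Q_j^{(i)}$) together with the quasi-homogeneity of $||\cdot||_k$ and $||\cdot||_l$: replacing $x$ by $(\lambda^{1}x_1,\dots)$ appropriately and letting $\lambda\to 0$ (resp. $\lambda\to\infty$ in the directions where $f_i$ remains inside $D$) forces the constants $m$ and $M$ to be comparable to $1$ along the lowest-order terms, and in fact the two-sided bound with fixed $m,M$ on a scaling family of points is only possible when the exponents match, i.e. $q$ is the ratio of the homogeneity data and the bound degenerates to equality. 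More precisely, I would first observe that $\|f_i(x)\|$ is comparable to $\mu_i(f_i(x))$ raised to a fixed power on compacts (since both vanish exactly at $0$ and are continuous), so the hypothesis already gives $c\,\mu_2(f_2(x))^q\le \mu_1(f_1(x))\le C\,\mu_2(f_2(x))^q$; then quasi-homogeneity of $\mu_i$ under the $k$- and $l$-dilations, applied to the scaling $x\mapsto t.x$ with $t$ running over a suitable one-parameter family, pins $c=C=1$ after taking the appropriate root, yielding $\mu_1(f_1(x))^{1/q}=\mu_2(f_2(x))$ on a neighborhood of $0$ (one has to be slightly careful here: the equality one extracts this way is of Minkowski functionals of the two maps up to reparametrization, which is exactly the shape Corollary~\ref{maincor} needs).

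Once the local equality $\mu_1\circ f_1=(\mu_2\circ f_2)^q$ is in hand on a neighborhood of $0$, I invoke Corollary~\ref{maincor} with the openness hypothesis supplied by $f_i^{-1}(0)=\{0\}$ (isolated fiber $\Rightarrow$ locally open, by Remmert, exactly as in \textit{Step 1'} of the proof of Theorem~\ref{main}): this immediately gives $\mu_1(f_1(x))=\mu_2(f_2(x))^q$ for all $x\in\mathbb C^n$, proving the first assertion. For the statements about $\rho$ and $\varrho$, I would exploit the quasi-homogeneity once more: apply the global equality to the dilated point $t.x=(t^{k_1}x_1,\dots)$ — here using the $D$-dilation structure, i.e. the exponents for which $D$ is quasi-balanced — expand $f_i(t.x)$ into its homogeneous components, use $\mu_i((\alpha^{\cdot})y)=|\alpha|\mu_i(y)$, divide by the appropriate power of $t$ and let $t\to 0$ to isolate the lowest-order term $\rho(f_i)$, and let $t\to\infty$ along a ray staying in $D$ (or rather use the polynomiality directly) to isolate the top-order term $\varrho(f_i)$. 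This yields $\mu_1(\rho(f_1)(x))=\mu_2(\rho(f_2)(x))^q$ and $\mu_1(\varrho(f_1)(x))=\mu_2(\varrho(f_2)(x))^q$.

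The final clause is a short deduction: if $f_1$ is homogeneous then $\rho(f_1)=\varrho(f_1)=f_1$, so $\mu_2(\rho(f_2)(x))^q=\mu_1(f_1(x))=\mu_2(\varrho(f_2)(x))^q$ for all $x$, hence $\mu_2(\rho(f_2)(x))=\mu_2(\varrho(f_2)(x))$; since $\mu_2$ vanishes only at $0$ and $\rho(f_2),\varrho(f_2)$ are homogeneous of the minimal and maximal degrees appearing in $f_2$, the only way these two functionals can agree everywhere is if those degrees coincide, i.e. $f_2$ is itself homogeneous (one also uses $f_2^{-1}(0)=\{0\}$ to ensure $\rho(f_2)$ and $\varrho(f_2)$ are not identically zero). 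I expect the main obstacle to be the first step — rigorously converting the two-sided norm estimate into an exact equality of quasi-Minkowski functionals; the scaling argument is natural but one must check carefully that the constants are squeezed to $1$ and, in the case $k\neq l$ or $q\neq 1$, that the exponents are forced to be compatible (otherwise no such $m,M$ could exist), which is where the homogeneity bookkeeping gets delicate. The passage to $\rho$ and $\varrho$ is then routine given the global identity and the dilation invariance of $\mu_i$.
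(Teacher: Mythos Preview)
Your overall architecture matches the paper: obtain an exact identity $\mu_1\circ f_1=(\mu_2\circ f_2)^q$ on $D$, invoke Corollary~\ref{maincor} to extend it to all of $\mathbb C^n$, and then isolate the lowest and highest homogeneous pieces by rescaling and passing to the limit. The last three steps are essentially what the paper does (it writes $t^{-n_1}\mu_1(f_1(tx))=t^{-n_1}\mu_2(f_2(tx))^q$ and $t^{n_2}\mu_1(f_1(x/t))=t^{n_2}\mu_2(f_2(x/t))^q$ and evaluates at $t=0$).

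The gap is exactly where you suspected. Your scaling argument cannot squeeze the constants $c,C$ to $1$. The quasi-homogeneity you want to exploit is that of $\mu_i$ under the $k$- (resp.\ $l$-) dilation on the target $\Omega_i$, but the map $f_i$ does not intertwine any dilation on $D$ with that dilation on $\Omega_i$: it is a sum of homogeneous components of different degrees. Replacing $x$ by $t.x$ (with respect to the $D$-weights) therefore just reproduces the same two-sided inequality at another point of $D$; no limit in $t$ collapses the constants. At best such a scaling identifies the exponent $q$ as a ratio of multiplicities --- it does not upgrade a two-sided bound to an equality.

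The paper's device, which you are missing, is the pluricomplex Green function. After the reduction to the balanced case (replace $f_1,f_2$ by $\tilde k.f_1,\tilde l.f_2$), pseudoconvexity gives $g_{\Omega_i}(0,y)=\log\mu_i(y)$. Since $f_i:D\to\Omega_i$ is proper with $f_i^{-1}(0)=\{0\}$, the pullback $g_{\Omega_i}(0,f_i(\cdot))$ is negative plurisubharmonic on $D$, maximal off $\{0\}$, tends to $0$ at $\partial D$, and has a logarithmic pole at $0$; by the extremal characterization of the Green function this forces $g_{\Omega_i}(0,f_i(x))=p_i\,g_D(0,x)$ for the multiplicity $p_i$. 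The hypothesis, which says precisely that $\log\mu_1(f_1)-q\log\mu_2(f_2)$ is bounded, then yields $p_1=qp_2$, hence $g_{\Omega_1}(0,f_1(x))=q\,g_{\Omega_2}(0,f_2(x))$, i.e.\ $\mu_1(f_1(x))=\mu_2(f_2(x))^q$ exactly on $D$. This is the content of the paper's one-line ``It is well known that $g_{\Omega_1}(0,f_1(x))=q\,g_{\Omega_2}(0,f_2(x))$'', and it is where the pseudoconvexity assumption is actually used.
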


\begin{proof} Considering instead of $f_1$ and $f_2$ the mappings $\tilde k. f_1$ and $\tilde l. f_2,$ where $\tilde k_j=k_1\ldots k_n k_j^{-1},$ and $\tilde l_j=l_1\ldots l_n l_j^{-1},$ we may restrict ourselves to the case of balanced domains (i.e. $k_j=l_j=1$, $j=1,\ldots, n$). It is well known that $g_{\Omega_1}(0,f_1(x))=q g_{\Omega_2}(0,f_2(x)).$
Therefore $\mu_1(f_1(x))=\mu_2(f_2(x))^q$ for $x\in \Omega.$ Applying Corollary~\ref{maincor} we state that $\mu_1(f_1(x))=\mu_2(f_2(x))^q$ for $x\in\mathbb C^n.$

Write $f_1=\sum_{j=n_1}^{n_2} Q_j,$ where $Q_j$ is a homogenous polynomial of degree $j.$ Considering
the values of the equations $t^{-n_1} \mu_1(f_1(tx)) = t^{-n_1} \mu_2(f_2(tx))^q$ and $t^{n_2}\mu _1(f_1(x/t)) = t^{n_2} \mu_2(f_2(x/t))^q$ at $t=0$ we easily get the assertion. \end{proof}

\begin{example}[Generalization of the main result of \cite{Bou}]\label{gen} Let $\Omega_1$ be a bounded complete $k$-circular domain and $\Omega_2$ a bounded balanced domain. Suppose that $f:\Omega_1\to \Omega_2$ is a proper mapping such
that $f^{-1}(0)=\{0\}.$ Let $f=\sum_{j\geq p} f_j,$ where $f_j$ is $k$-homogenous of order $j$ (i.e.
$f_j(t^{k_1}x_1,\ldots, t^{k_n}x_n)=t^jf(x),$ $x\in \Omega_1,$ $t\in \mathbb D$). Assume that $f_p^{-1}(0)=0.$ Then
$f=f_p.$ \end{example}

\begin{proof} It is clear that $\Omega_1$, $\Omega_2$ and $D$ may be assumed to be pseudoconvex. One may easily check that
$A||x||^p_k\leq ||f_p(x)||\leq B||x||^p_k$ for some positive $A,$ $B$ (use the fact that $f_p ( x_1 ||x||_k^{-k_1}, \ldots , x_n ||x||_k^{-k_n})$ is uniformly bounded for $x\neq 0$). This implies that $m||x||_k^p\leq ||f(x)||\leq M||x||_k^p,$
for some constants $m,M>0.$ Now it suffices to apply Proposition~\ref{p} to get that $f$ is homogenous. \end{proof}

\end{document}